\newcommand{\Z}{\mathbb Z}
 \newtheorem{thm}{Theorem}[section]
 \newtheorem{prop}[thm]{Proposition}
 \theoremstyle{definition}
 \newtheorem{rem}[thm]{Remark}
 \numberwithin{equation}{section}
\DeclareMathOperator{\sym}{sym}
\DeclareMathOperator{\GL}{GL}
\DeclareMathOperator{\SL}{SL}
\title{Comparing Hecke eigenvalues of newforms}
\author{\sc{Liubomir Chiriac}}
\newcommand{\Address}{{% additional braces for segregating \footnotesize
  \bigskip
 % \footnotesize

  \textsc{Liubomir Chiriac} \par\nopagebreak
  \textsc{Department of Mathematics} \par\nopagebreak
  \textsc{University of Massachusetts Amherst} \par\nopagebreak
  \textsc{710 N Pleasant St}, \par\nopagebreak
  \textsc{Amherst, MA 01003}\par\nopagebreak
  \textsc{USA} \par\nopagebreak
  \textit{E-mail address}: \texttt{chiriac@math.umass.edu}

%  \medskip

}}
\begin{document}
\date{}
\maketitle

\begin{abstract}
Given two distinct newforms with real Fourier coefficients, we show that the set of primes where the Hecke eigenvalues of one of them dominate the Hecke eigenvalues of the other has density $\geq 1/16$. Furthermore, if the two newforms do not have complex multiplication, and neither is a quadratic twist of the other, we also prove a similar result for the squares of their Hecke eigenvalues. 
\end{abstract}

\section{Introduction}

Let $k\geq 2$ be an even integer and $N$ a positive integer. Denote by $S_k(N)^{\text{new}}$ the set of all cuspidal newforms of weight $k$ for the congruence subgroup $\Gamma_0(N)\subseteq \SL_2(\Z)$ with trivial Nebentypus. Any $f\in S_k(N)^{\text{new}}$ has 
a Fourier expansion at infinity in the upper-half plane $\Im(z)>0$ $$f(z)=\sum_{n\geq 1} a_f(n) e^{2\pi inz}.$$

\noindent The fact that $f$ has trivial Nebetypus implies that the number field $K_f$ generated by all the Fourier coefficients $a_f(n)$ is totally real. We fix a real embedding of $K_f$ and note that our analysis below is independent of this choice. Since $f$ is a newform, it is a simultaneous eigenform for all the Hecke operators, with the corresponding Hecke eigenvalues $a_f(n)$. For convenience, we shall consider the normalized eigenvalues
\[ \lambda_f(n)=\frac{a_f(n)}{n^{(k-1)/2}}.\] 
The extent to which the signs of $\lambda_f(p)$ at primes $p$ determine $f$ uniquely has been first studied by Kowalski, Lau, Soundararajan and Wu \cite{KLSW} (and also by Matom\"aki \cite{Mat}, who refined some of their results). We shall concern ourselves with the following related question: 

\begin{quotation}
Is it possible that for two distinct newforms $f$ and $g$ the eigenvalues $\lambda_f(p)$ are not less than $\lambda_g(p)$, for almost all primes $p$?
\end{quotation}
	
\noindent The first result of this paper is to show that the above situation cannot occur. More precisely, we prove that the set of primes where the eigenvalues of $f$ is strictly less than the corresponding eigenvalues of $g$ has analytic density $\geq 1/16$. Throughout the paper, we shall say that a set $\mathcal{A} $ of primes has analytic density (or Dirichlet density) $\delta>0$ if and only if \[\sum_{p\in \mathcal{A}}\frac{1}{p^s}\sim \delta\sum_{p}\frac{1}{p^s} \text{ as } s\to 1^{+}.\]

\begin{thm} \label{thm1}
Let $k_1,k_2\geq 2$ be even integers and $N_1,N_2\geq 1$ integers. Let $f\in S_{k_1}(N_1)^{\text{new}}$ and $g\in S_{k_2}(N_2)^{\text{new}}$ be two distinct newforms. Then the set $$\{p~|~\lambda_f(p)<\lambda_g(p)\}$$ has analytic density at least $1/16$.
\end{thm}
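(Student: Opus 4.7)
The plan is to couple a Rankin--Selberg second-moment estimate for $\lambda_f(p)-\lambda_g(p)$ with a first-moment bound coming from the analyticity of $L(s,f)$ and $L(s,g)$ at $s=1$, and then use Deligne's bound $|\lambda_f(p)|,|\lambda_g(p)|\le 2$ to extract a lower bound of $1/16$ on the analytic density of $B=\{p\,:\,\lambda_f(p)<\lambda_g(p)\}$. The entire argument runs in the Dirichlet-series world, examining the behavior of various sums $\sum_p (\cdots)p^{-s}$ as $s\to 1^{+}$.

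For the second moment, the real Fourier coefficients force $f$ and $g$ to be self-dual, so $L(s,f\otimes f)$ and $L(s,g\otimes g)$ each have a simple pole at $s=1$, while $L(s,f\otimes g)$ is holomorphic there (the only obstruction would be $g\cong f$, excluded by hypothesis). Taking logarithmic Dirichlet expansions and absorbing the bounded contribution from ramified primes and higher prime powers, one obtains
\[
\sum_{p}\frac{\lambda_f(p)^2}{p^s},\;\sum_{p}\frac{\lambda_g(p)^2}{p^s}\sim\log\frac{1}{s-1},\qquad \sum_{p}\frac{\lambda_f(p)\lambda_g(p)}{p^s}=O(1),
\]
and hence $\sum_{p}(\lambda_f(p)-\lambda_g(p))^2 p^{-s}\sim 2\log\frac{1}{s-1}$. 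For the first moment, holomorphy and non-vanishing of $L(s,f)$ and $L(s,g)$ on the line $\operatorname{Re}(s)=1$ give $\sum_p (\lambda_f(p)-\lambda_g(p))p^{-s}=O(1)$.

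Set $x_p=\lambda_f(p)-\lambda_g(p)$; by Deligne $|x_p|\le 4$, so $x_p^2\le 4|x_p|$. Splitting the Dirichlet series into positive and negative parts, define
\[
P(s)=\sum_{x_p>0}\frac{x_p}{p^s},\qquad N(s)=\sum_{x_p<0}\frac{-x_p}{p^s}.
\]
The first-moment estimate reads $P(s)-N(s)=O(1)$, while the second-moment estimate combined with $x_p^2\le 4|x_p|$ yields $P(s)+N(s)\ge \tfrac12\log\tfrac{1}{s-1}+O(1)$. Taking the difference gives $2N(s)\ge \tfrac12\log\tfrac{1}{s-1}+O(1)$, i.e.\ $N(s)\ge \tfrac14\log\tfrac{1}{s-1}+O(1)$. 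Since $N(s)\le 4\sum_{p\in B}p^{-s}$, one concludes
\[
\sum_{p\in B}\frac{1}{p^s}\;\ge\;\frac{1}{16}\log\frac{1}{s-1}+O(1),
\]
which is the required analytic density bound.

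The numerical constant $1/16$ is forced by pairing the trivial pointwise bound $|x_p|\le 4$ with the Rankin--Selberg second moment, so any quantitative improvement would demand a finer input (for example a joint Sato--Tate distribution in place of the pointwise bound). The most delicate step is verifying that $L(s,f\otimes g)$ does not acquire a spurious pole at $s=1$ when $f\ne g$: this is where the hypothesis of real Fourier coefficients (equivalently, trivial Nebentypus) is essential, since with a non-trivial character one would need $g$ non-isomorphic to the contragredient of $f$ rather than merely $g\ne f$.
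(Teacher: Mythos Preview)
Your proof is correct and follows essentially the same route as the paper: both combine the Rankin--Selberg second-moment estimate $\sum_p(\lambda_f(p)-\lambda_g(p))^2 p^{-s}=2\sum_p p^{-s}+O(1)$, the first-moment bound $\sum_p(\lambda_f(p)-\lambda_g(p))p^{-s}=O(1)$, and Deligne's $|\lambda_f(p)-\lambda_g(p)|\le 4$ to reach the $1/16$ lower bound. The only cosmetic difference is that you organize the bookkeeping via the positive/negative parts $P(s),N(s)$ of the first moment, whereas the paper splits the second-moment sum directly over $p\in\mathcal{F}$ and $p\notin\mathcal{F}$; the underlying arithmetic is identical.
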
 

\begin{rem} By symmetry, the theorem implies that the set $$\mathcal{F}'=\{p~|~\lambda_f(p)>\lambda_g(p)\}$$ also has density at least $1/16$. Thus, the set $\{p~|~\lambda_f(p)=\lambda_g(p)\}$ where the corresponding Hecke eigenvalues of $f$ and $g$ agree has density at most $7/8$. This is consistent with Ramakrishnan's refinement of the strong multiplicty one for $\GL(2)$ from \cite{DR}, which states that if two cuspidal automorphic representations of $\GL(2)$ over a global field have isomorphic local components \textit{outside} a set of primes of density less than $1/8$, then they are globally isomorphic. 
\end{rem}

\noindent The second result addresses the same question about the squares of the eigenvalues of $f$ and $g$. Assuming that $f$ and $g$ do not have complex multiplication, and that neither is a quadratic twist of each other, we can obtain a similar density estimate.

\begin{thm} \label{thm2}
Let $k_1,k_2\geq 2$ be even integers and $N_1,N_2\geq 1$ integers. Let $f\in S_{k_1}(N_1)^{\text{new}}$ and $g\in S_{k_2}(N_2)^{\text{new}}$ be two newforms without complex multiplication, such that neither is a quadratic twist of the other. Then the set $$\{p~|~\lambda^2_f(p)<\lambda^2_g(p)\}$$ has analytic density at least $1/16$.
\end{thm}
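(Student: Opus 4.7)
The plan is to reduce Theorem~\ref{thm2} to an analogue of Theorem~\ref{thm1} carried out for the symmetric-square lifts. By the Hecke relation $\lambda_f(p)^2 = \lambda_f(p^2)+1$, one has $\lambda_f(p)^2 - 1 = \lambda_{\sym^2 f}(p)$, so the event $\lambda_f^2(p) < \lambda_g^2(p)$ is identical to $\lambda_F(p) < \lambda_G(p)$, where $F = \sym^2 f$ and $G = \sym^2 g$. Since $f$ and $g$ are non-CM, the Gelbart--Jacquet lift realizes $F$ and $G$ as self-dual cuspidal automorphic representations of $\GL(3)$. Moreover, a theorem of Ramakrishnan implies that $\sym^2 f \cong \sym^2 g$ forces $g$ to be a quadratic twist of $f$; the latter is excluded by hypothesis, so $F \not\cong G$.

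With these inputs in place, I would reproduce the analytic argument behind Theorem~\ref{thm1} on $\GL(3)$ rather than $\GL(2)$. Cuspidality of $F$ and $G$ together with the Jacquet--Shalika non-vanishing of $L(s,F)$ and $L(s,G)$ on $\Re(s)=1$ gives $\sum_p \lambda_F(p)/p^s = O(1)$ and $\sum_p \lambda_G(p)/p^s = O(1)$ as $s \to 1^+$. By Rankin--Selberg theory, $L(s, F\times F)$ and $L(s, G\times G)$ have simple poles at $s=1$ by self-duality, producing $\sum_p \lambda_F(p)^2/p^s \sim \sum_p 1/p^s$ and similarly for $G$; while $F \not\cong G$ makes $L(s, F \times G)$ holomorphic at $s=1$, so $\sum_p \lambda_F(p)\lambda_G(p)/p^s = O(1)$. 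Translating via $\lambda_f^2 = \lambda_F + 1$ and $\lambda_g^2 = \lambda_G + 1$, one obtains the two key inputs
\[
\sum_p \frac{\lambda_f^2(p)-\lambda_g^2(p)}{p^s} = O(1), \qquad \sum_p \frac{(\lambda_f^2(p)-\lambda_g^2(p))^2}{p^s} \sim 2 \sum_p \frac{1}{p^s}.
\]

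From here the argument is the same weighted second-moment manipulation as in the proof of Theorem~\ref{thm1}. Set $\mathcal{F} = \{p : \lambda_f^2(p) < \lambda_g^2(p)\}$ and $\mathcal{F}' = \{p : \lambda_f^2(p) > \lambda_g^2(p)\}$, and let $P(s) = \sum_{p \in \mathcal{F}}(\lambda_g^2-\lambda_f^2)/p^s$, $N(s) = \sum_{p \in \mathcal{F}'}(\lambda_f^2-\lambda_g^2)/p^s$, both nonnegative. The first-moment input gives $N(s) - P(s) = O(1)$. Since $\lambda_f(p)^2, \lambda_g(p)^2 \in [0,4]$ by Deligne's bound, $|\lambda_f^2-\lambda_g^2| \leq 4$, hence $(\lambda_f^2-\lambda_g^2)^2 \leq 4 |\lambda_f^2-\lambda_g^2|$, so the second-moment input converts to $P(s) + N(s) \geq (\tfrac{1}{2}+o(1))\sum_p 1/p^s$. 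Combining the two inequalities yields $P(s) \geq (\tfrac{1}{4}+o(1))\sum_p 1/p^s$; one last use of $\lambda_g^2 - \lambda_f^2 \leq 4$ on $\mathcal{F}$ gives $\sum_{p \in \mathcal{F}} 1/p^s \geq P(s)/4 \geq (\tfrac{1}{16}+o(1))\sum_p 1/p^s$, which is the desired density bound.

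The main obstacle is the automorphic-theoretic input at the beginning: cuspidality of the symmetric-square lifts, where the non-CM hypothesis enters through Gelbart--Jacquet, and the non-isomorphism $\sym^2 f \not\cong \sym^2 g$, which uses Ramakrishnan's identification of the fibers of the symmetric-square lift with quadratic-twist classes. Once $F$ and $G$ are secured as distinct cuspidal representations of $\GL(3)$, the remaining work is essentially a transcription of the Rankin--Selberg and moment calculation behind Theorem~\ref{thm1}.
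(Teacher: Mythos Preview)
Your argument is correct, and the analytic endgame (the weighted second-moment manipulation with Deligne's bound) is the same as the paper's. The difference lies in how you obtain the two key inputs
\[
\sum_p \frac{\lambda_f^2(p)-\lambda_g^2(p)}{p^s}=O(1),\qquad
\sum_p \frac{(\lambda_f^2(p)-\lambda_g^2(p))^2}{p^s}=2\sum_p\frac{1}{p^s}+O(1).
\]
The paper expands $(\lambda_f^2-\lambda_g^2)^2=\lambda_f^4+\lambda_g^4-2\lambda_f^2\lambda_g^2$ and evaluates the pieces separately: the fourth moments via the holomorphy and non-vanishing of $L(\sym^4 f,s)$ at $s=1$ (Kim--Shahidi), and the cross term via Ramakrishnan's cuspidality of $f\boxtimes g$ on $\GL(4)$ together with the simple pole of $L(s,\pi\times\tilde\pi)$. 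You instead pass to $F=\sym^2 f$, $G=\sym^2 g$ on $\GL(3)$ and run the Theorem~\ref{thm1} argument verbatim there, using only Gelbart--Jacquet cuspidality (non-CM), general Rankin--Selberg theory on $\GL(3)\times\GL(3)$ for the pole of $L(s,F\times\tilde F)$ and the holomorphy of $L(s,F\times\tilde G)$, and Ramakrishnan's multiplicity one for $\SL(2)$ to secure $F\not\cong G$ from the ``no quadratic twist'' hypothesis. Your route avoids the symmetric fourth power altogether and makes the parallel with Theorem~\ref{thm1} completely transparent; the paper's route is a more hands-on moment computation that leans on the deeper functoriality inputs $\sym^4$ and the $\GL(4)$ transfer.
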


\noindent The strategy of the proof is similar to the one used by Kowalski et al. in \cite{KLSW}, and essentially involves Serre's idea from \cite{Sha} to study the distribution of the Hecke eigenvalues based on the the properties of the first symmetric power $L$-functions. The main ingredients are contained in Section~\ref{inter}. We note that the additional hypotheses in Theorem~\ref{thm2} are required in part (ii) of Proposition~\ref{prop}, where we use the existence  of a cuspidal automorphic representation $\pi$ on $\GL(4)$ (proved by Ramakrishnan in \cite{DR1}) such that $L(\pi,s)=L(f\times g,s)$.

\section{Preliminaries} \label{prelim}

Let $f\in S_k(N)^{\text{new}}$ be a newform. From the standard theory of the Hecke operators, one knows that for all integers $u,v\geq 1$ \[\lambda_f(u)\lambda_f(v)=\sum_{\substack{d|(u,v)\\ (d,N)=1}}\lambda_f\left(\frac{uv}{d^2}\right).\]
For $\Re(s)>1$, the $L$-function $L(f,s)=\sum_{n\geq 1}\frac{\lambda_f(n)}{n^s}$ associated to $f$ can be factored into Euler products
\[\prod_{p}\frac{1}{1-\lambda_f(p)p^{-s}+p^{-2s}}=\prod_p \frac{1}{(1-\alpha_p p^{-s})(1-\beta_pp^{-s})}\] where $\alpha_p$ and $\beta_p$ are the roots of the Hecke polynomial $x^2-\lambda_f(p)x+1$, so that 
\[\alpha_p+\beta_p=\lambda_f(p) \text{ and } \alpha_p\beta_p=1.\] 

\noindent The multiplicative properties of the coefficients imply that for $m\geq 2$ and $p\nmid N$ we have 
\[\lambda_f(p^m)=\lambda_f(p^{m-1})\lambda_f(p)-\lambda_f(p^{m-2})\] and by induction
\[\lambda_f(p^m)=\frac{\alpha_p^{m+1}-\beta_p^{m+1}}{\alpha_p-\beta_p}.\]
One can also write 
\[\lambda_f(p^m)=P_m(\lambda_f(p)),\] where $P_m(x)$ is a polynomial of degree $m$ with real coefficients, such that $P_0(x)=1$, $P_1(x)=x$ and 
\[P_m(x)=xP_{m-1}(x)-P_{m-2}(x)\text{, for $m\geq 2$.}\] Note that \[P_m(x)=U_m\left(\frac{x}{2}\right),\] where $U_m(x)$ is the $m$-th Chebyshev polynomials of the second kind. 

\noindent The $m$-th symmetric power $L$-function attached to $f$ is defined by 
\[L(\sym^m f,s)=\prod_p \prod_{j=0}^m (1-\alpha_p^{m-j}\beta_p^{j}p^{-s})^{-1},\]
and it can also be expressed as a Dirichlet series for $\Re(s)>1$:
\[\sum_{n=1}^{\infty} \frac{\lambda_{\sym^m f}(n)}{n^s}=\prod_p \left(1+\frac{\lambda_{\sym^m f}(p)}{p^s}+\frac{\lambda_{\sym^2 f}(p^2)}{p^{2s}}+\dots \right),\] which implies that 
\[\lambda_{\sym^m f}(p)=\sum_{j=0}^{m}\alpha_p^{m-j}\beta_p^{j}=\lambda_f(p^m).\]

\noindent In particular, 
\[L(\sym^2 f,s)=\prod_p \left(1-\frac{\lambda_f(p^2)}{p^s}+\frac{\lambda_f(p^2)}{p^{2s}}-\frac{1}{p^{3s}} \right)^{-1},\] and 
\[\lambda_{\sym^2 f}(p)=\lambda_f(p^2)=\lambda_f^2(p)-1.\]

\section{An intermediary result} \label{inter}

\noindent In this section we prove a proposition, which will be needed in the proof of Theorem~\ref{thm1} and Theorem~\ref{thm2}. It is based on the holomorphy and non-vanishing at $s=1$ of the symmetric square and symmetric fourth power $L$-functions. In addition, a modularity result of the Rankin-Selberg $L$-series is also used. 

\begin{prop} \label{prop} Let $k_1,k_2\geq 2$ be even integers and $N_1,N_2\geq 1$ integers.
	\begin{enumerate}[(i)] 
	\item Let $f\in S_{k_1}(N_1)^{\text{new}}$ and $g\in S_{k_2}(N_2)^{\text{new}}$ be two distinct newforms. Then \[\sum_{p}\frac{(\lambda_f(p)-\lambda_g(p))^2}{p^s}=2\sum_{p}\frac{1}{p^s}+O(1)\text{, as } s\to 1^{+}.\]
	\item Let $f\in S_{k_1}(N_1)^{\text{new}}$ and $g\in S_{k_2}(N_2)^{\text{new}}$ be two newforms without complex multiplication, such that neither is a quadratic twist of the other. Then \[\sum_{p}\frac{(\lambda_{f}^2(p)-\lambda_{g}^2(p))^2}{p^s}=2\sum_{p}\frac{1}{p^s}+O(1)\text{, as } s\to 1^{+}.\]
	\end{enumerate}
\end{prop}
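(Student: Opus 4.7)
The plan is to exploit the standard relation $\log L(\pi,s)=\sum_p \lambda_\pi(p) p^{-s}+O(1)$ as $s\to 1^+$, valid for any reasonable automorphic $L$-function (the error term absorbs the absolutely convergent prime-power contributions). Thus a simple pole of $L(\pi,s)$ at $s=1$ contributes a term $\sum_p p^{-s}$ to the prime Dirichlet series, while holomorphy and non-vanishing at $s=1$ contribute only $O(1)$. The strategy is therefore to expand each square and decompose the result into Hecke coefficients of $L$-functions whose behaviour at $s=1$ is known.

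For part (i), write
\[(\lambda_f(p)-\lambda_g(p))^2=\lambda_f(p)^2+\lambda_g(p)^2-2\lambda_f(p)\lambda_g(p).\]
By Section~\ref{prelim} one has $\lambda_f(p)^2=\lambda_{\sym^2 f}(p)+1$, and Shimura's theorem gives the holomorphy and non-vanishing of $L(\sym^2 f,s)$ at $s=1$, hence $\sum_p\lambda_f(p)^2 p^{-s}=\sum_p p^{-s}+O(1)$, and similarly for $g$. For the cross term, the Rankin--Selberg $L$-function $L(f\times g,s)$ is holomorphic and non-vanishing at $s=1$ because $f\neq g$ and each form is self-dual (real coefficients, trivial Nebentypus); hence $\sum_p\lambda_f(p)\lambda_g(p) p^{-s}=O(1)$, and summing the three contributions yields $2\sum_p p^{-s}+O(1)$.

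For part (ii), the Chebyshev identity (equivalently, the $\mathrm{SU}(2)$ decomposition $\sym^2\otimes\sym^2\cong\sym^4\oplus\sym^2\oplus 1$) gives
\[\lambda_f(p)^4=\lambda_{\sym^4 f}(p)+3\lambda_{\sym^2 f}(p)+2.\]
Under the no-CM hypothesis, the Kim--Shahidi theorem ensures $L(\sym^4 f,s)$ is holomorphic and non-vanishing at $s=1$, so $\sum_p\lambda_f(p)^4 p^{-s}=2\sum_p p^{-s}+O(1)$, and similarly for $g$. For the cross term $\lambda_f(p)^2\lambda_g(p)^2$, I would invoke Ramakrishnan's cuspidal $\pi$ on $\GL(4)$ with $L(\pi,s)=L(f\times g,s)$; cuspidality requires exactly that neither $f$ nor $g$ has CM and that $g$ is not a quadratic twist of $f$. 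The Satake parameters of $\pi$ being the four pairwise products of those of $f$ and $g$, one has $\lambda_\pi(p)=\lambda_f(p)\lambda_g(p)$; since $\pi$ is self-dual with real coefficients, the Rankin--Selberg $L$-function $L(\pi\times\pi,s)$ has a simple pole at $s=1$, giving $\sum_p\lambda_f(p)^2\lambda_g(p)^2 p^{-s}=\sum_p p^{-s}+O(1)$. Combining, $2\sum_p p^{-s}+2\sum_p p^{-s}-2\sum_p p^{-s}+O(1)=2\sum_p p^{-s}+O(1)$.

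The deep inputs---Shimura's result on $L(\sym^2 f,s)$, Rankin--Selberg for $\GL(2)$, Kim--Shahidi for $\sym^4$, and Ramakrishnan's $\GL(4)$ tensor product---are assumed as black boxes. The main obstacle I would anticipate is verifying carefully that the hypotheses in part (ii) are precisely what is needed to avoid spurious poles at $s=1$: one needs $\pi$ to be genuinely cuspidal (so that the pole of $L(\pi\times\pi,s)$ is simple and no additional singularities appear) and $\sym^4 f,\sym^4 g$ to be cuspidal (so that their $L$-functions are entire). With these in hand, the remainder is routine algebraic bookkeeping.
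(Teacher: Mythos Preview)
Your proposal is correct and follows essentially the same route as the paper: expand the square, convert $\lambda_f(p)^2$ and $\lambda_f(p)^4$ into symmetric-power coefficients, and then read off the behaviour at $s=1$ from the known analytic properties of $L(\sym^2 f,s)$, $L(\sym^4 f,s)$, $L(f\times g,s)$, and $L(\pi\times\tilde\pi,s)$ for Ramakrishnan's cuspidal $\pi$ on $\GL(4)$. The only cosmetic difference is that the paper cites Gelbart--Jacquet (rather than Shimura) for the symmetric square input and writes $L(\pi\times\tilde\pi,s)$ rather than $L(\pi\times\pi,s)$, which coincide since $\pi$ is self-dual.
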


\begin{proof} It is known that for $m\in \{2,4\}$, there exist cuspidal autmorphic representations $\pi_f^{(m)}$ on $\GL(m+1)$ such that $L(\sym^m f,s)=L(\pi_f^{(m)},s)$.  Indeed, the case $m=2$ is due to Gelbart and Jacquet \cite{GJ}, while the case $m=4$ follows from the work of Kim and Shahidi \cite{KS} on the symmetric fourth power. Recall that the coefficients of the $m$-th symmetric power $L$-functions are given by \[\lambda_{\sym^m f}(p)=\lambda_f(p^m)=P_m(\lambda_f(p)),\] where $P_m(x)$ is the polynomial introduced in Section~\ref{prelim}. Hence, for $m\in\{2,4\}$ we have
\begin{equation}
\sum_{p}\frac{\lambda_f(p^m)}{p^s}=O(1) \text{ as $s\to 1^{+}$}. \label{symsquare}
\end{equation}

\noindent In particular, since $P_4(x)=x^4-3x^2+1$, we obtain from \eqref{symsquare} that  
	\begin{align}
	\sum_p \frac{\lambda_f^4(p)}{p^s}&=\sum_p \frac{\lambda_f(p^4)+3\lambda_f^2(p)-1}{p^s} \notag \\
	&=\sum_p \frac{\lambda_f(p^4)}{p^s} +3\sum_{p} \frac{\lambda_f(p^2)+1}{p^s}-\sum_{p}\frac{1}{p^s} \notag \\
	&=\sum_p \frac{\lambda_f(p^4)}{p^s}+3\sum_p \frac{\lambda_f(p^2)}{p^s}+2\sum_p \frac{1}{p^s} \notag \\
	&=2\sum_p \frac{1}{p^s}+O(1) \text{ as $s\to 1^{+}$},  \label{symfourth}
	\end{align}

\noindent Moreover, since $f$ and $g$ are distinct, the Rankin-Selberg convolution $L(f\times g,s)$ has no poles, and does not vanish at $s=1$. Thus
\begin{equation}
\sum_{p}\frac{\lambda_f(p)\lambda_g(p)}{p^s}=O(1) \text{ as $s\to 1^{+}$}. \label{fnotg}
\end{equation}

\noindent Using \eqref{symsquare} and \eqref{fnotg}, we can now prove part (i): 
\begin{align}
\sum_{p}\frac{(\lambda_f(p)-\lambda_g(p))^2}{p^s}&=\sum_p \frac{\lambda_f^2(p)}{p^s}+\sum_p \frac{\lambda_g^2(p)}{p^s}-2\sum_{p}\frac{\lambda_f(p)\lambda_g(p)}{p^s} \notag \\
&= \sum_p \frac{1+\lambda_f(p^2)}{p^s}+\sum_p \frac{1+\lambda_g(p^2)}{p^s}-2\sum_{p}\frac{\lambda_f(p)\lambda_g(p)}{p^s} \notag \\
&=2\sum_p\frac{1}{p^s}+\sum_p \frac{\lambda_f(p^2)}{p^s}+\sum_p \frac{\lambda_g(p^2)}{p^s}-2\sum_{p}\frac{\lambda_f(p)\lambda_g(p)}{p^s} \notag \\
&=2\sum_p\frac{1}{p^s}+O(1) \text{ as $s\to 1^{+}$}. \notag
\end{align} 

\smallskip

\noindent To prove part (ii) we need an additional ingredient. Namely, if $f$ and $g$ are newforms without complex multiplication, such that neither is a quadratic twist of the other, a result of Ramakrishnan \cite{DR1} guarantees the existence of a cuspidal automorphic representation $\pi$ on $\GL(4)$ such that $L(f\times g,s)=L(\pi,s)$. The coefficients of the Rankin-Selberg $L$-function associated to the pair $(f,g)$ are just $\lambda_f(p)\lambda_g(p)$. Furthermore, $L(\pi\times \tilde \pi,s)$ has a pole of order one at $s=1$, which implies that 
\begin{equation}
\sum_{p}\frac{\lambda_f^2(p)\lambda_g^2(p)}{p^s}=\sum_p\frac{1}{p^s}+O(1) \text{ as $s\to 1^{+}$}.\label{Ram}
\end{equation}

\noindent Part (ii) follows readily from \eqref{symfourth} and \eqref{Ram}:
\begin{align}
\sum_{p}\frac{(\lambda_{f}^2(p)-\lambda_{g}^2(p))^2}{p^s}&=
\sum_p \frac{\lambda_f^4(p)}{p^s}+\sum_p \frac{\lambda_g^4(p)}{p^s}-2\sum_{p}\frac{\lambda_f^2(p)\lambda_g^2(p)}{p^s} \notag \\
&=2\sum_{p}\frac{1}{p^s}+O(1)\text{, as } s\to 1^{+}. \notag
\end{align}
	
\end{proof}

\section{Proof of the main results} The first main result relies on part (i) of Proposition~\ref{prop}. 

\begin{proof}[Proof of Theorem~\ref{thm1}.] For convenience, let us denote $$\mathcal{F}=\{p~|~\lambda_f(p)<\lambda_g(p).\}$$ By Deligne's proof of the Ramanujan-Petersson conjecture we know that for any prime $p$ (not dividing the level of $f$) the following inequality holds
$$|a_f(p)|\leq 2p^{(k_1-1)/2}.$$ Therefore, the normalized coefficients satisfy $|\lambda_f(p)|,|\lambda_g(p)|\leq 2$, so $|\lambda_f(p)-\lambda_g(p)|\leq 4$ and 
\begin{equation} \label{inF}
\sum_{p\in \mathcal{F}}\frac{(\lambda_f(p)-\lambda_g(p))^2}{p^s}\leq 16 \sum_{p\in \mathcal{F}}\frac{1}{p^s}. 
\end{equation}

\noindent Now we estimate the sum over the primes $p$ outside of $\mathcal{F}$ (so $\lambda_f(p)-\lambda_g(p)\geq 0$, whenever $p\notin \mathcal{F}$): 
	\begin{align} \label{notinF}
	\sum_{p\notin \mathcal{F}}\frac{(\lambda_f(p)-\lambda_g(p))^2}{p^s}&
	\leq 4\sum_{p\notin \mathcal{F}}\frac{\lambda_f(p)-\lambda_g(p)}{p^s} \notag \\
	&=4\sum_{p}\frac{\lambda_f(p)-\lambda_g(p)}{p^s}-4\sum_{p\in \mathcal{F}}\frac{\lambda_f(p)-\lambda_g(p)}{p^s}\notag\\ 
	&\leq 16\sum_{p\in \mathcal{F}}\frac{1}{p^s}+O(1) \text{ as $s\to 1^{+}$}. 
	\end{align}
Combing the inequalities \eqref{inF} and \eqref{notinF}, we obtain 
\begin{equation} \label{32}
\sum_{p}\frac{(\lambda_f(p)-\lambda_g(p))^2}{p^s}\leq 32\sum_{p\in \mathcal{F}}\frac{1}{p^s}+O(1) \text{ as $s\to 1^{+}$}.
\end{equation}
Moreover, we know from part (i) of Proposition~\ref{prop} that 
\[\sum_{p}\frac{(\lambda_f(p)-\lambda_g(p))^2}{p^s}=2\sum_{p}\frac{1}{p^s}+O(1) \text{ as $s\to 1^{+}$}.\]
Therefore, as $s\to 1^{+}$ we have that \[\frac{1}{16}\sum_{p}\frac{1}{p^s}+O(1)\leq \sum_{p\in \mathcal{F}}\frac{1}{p^s},\] which shows that the set $\mathcal{F}$ has analytic density at least $1/16$. 
\end{proof}

\noindent The second main result can be obtained similarly, once we use part (ii) of Proposition~\ref{prop}. 

\begin{proof}[Proof of Theorem~\ref{thm2}.] We define the set 
\[\mathcal{S}=\{p~|~\lambda^2_f(p)<\lambda^2_g(p)\}.\] The Deligne bounds still give us $|\lambda_f^2(p)-\lambda_g^2(p)|\leq 4$, and  by the same argument that led to \eqref{32}, one can see that 
$$\sum_{p}\frac{(\lambda_f^2(p)-\lambda_g^2(p))^2}{p^s}\leq 32\sum_{p\in \mathcal{S}}\frac{1}{p^s}+O(1) \text{ as $s\to 1^{+}$}.$$

\noindent Finally, by part (ii) of Proposition~\ref{prop} 
\[\sum_{p}\frac{(\lambda_f^2(p)-\lambda_g^2(p))^2}{p^s}=2\sum_{p}\frac{1}{p^s}+O(1) \text{ as $s\to 1^{+}$},\] and the conclusion follows as before. 
\end{proof}

\subsection*{Acknowledgments} The author thanks Farshid Hajir, Dinakar Ramakrishnan and Siman Wong for helpful conversations.

\Address

\end{document}